\newtheorem{theorem}{Theorem}[section]
\newtheorem{lemma}[theorem]{Lemma}
\newtheorem{proposition}[theorem]{Proposition}
\newtheorem{remark}[theorem]{Remark}
\theoremstyle{definition}
\def\Id{I}
\def\ZZ{{\mathbb Z}}
\def\CC{{\mathbb C}}
\def\QQ{{\mathbb Q}}
\def\Lin{\mathop{\rm Lin}}
\def\Sp{\mathop{\rm Sp}}
\def\Tr{\mathop{\rm Tr}}
\def\D{\mathscr D}
\def\K{\mathscr K}
\begin{document}
\title[Spectrum nonincreasing maps]{Spectrum nonincreasing maps on matrices}

\author{Gregor Dolinar}
\address[Gregor Dolinar]{Faculty of Electrical Engineering, University of Ljubljana, Tr{\v{z}}a{\v{s}}ka cesta 25, SI-1000 Ljubljana, Slovenia}
\email{gregor.dolinar@fe.uni-lj.si}

\author{Jinchuan Hou}
\address[Jinchuan Hou]{School of
Mathematics, Taiyuan University of Technology,
 Taiyuan 030024, P. R. China} \email{jinchuanhou@yahoo.com.cn}

\author{Bojan Kuzma}
\address[Bojan Kuzma]{$^{1}$University of Primorska, FAMNIT, Glagolja\v ska 8, SI-6000 Koper, \and
$^{2}$Institute of Mathematics, Physics and Mechanics, Department of
Mathematics, Jadranska~19, SI-1000 Ljubljana, Slovenia.}
\email{bojan.kuzma@pef.upr.si}

\author{Xiaofei Qi}
\address[Xiaofei Qi]{School of Mathematical science,
 Shanxi University, Taiyuan 030006, P. R. China} \email{xiaofeiqisxu@yahoo.com.cn}

\thanks{{\it 2010 Mathematics Subject Classification.} 15A04, 15A18}
\thanks{{\it Key words and phrases.} Spectrum, Matrix algebras, General preservers.}
\thanks{This work was partially supported by a joint Slovene-Chinese Grant \mbox{BI-CN/11-13/9-2}, National Natural Science Foundation of China
(11171249, 11101250, 11201329) and a grant from International
Cooperation Program in Sciences and Technology  of Shanxi
(2011081039).}

\begin{abstract}Maps $\Phi$ which do not increase the spectrum on
complex matrices in a sense that
$\Sp(\Phi(A)-\Phi(B))\subseteq\Sp(A-B)$ are classified.

\end{abstract}

\maketitle

\section{Introduction}

Let $M_n(\CC)$ be the set of all $n\times n$ matrices over the
complex field $\CC$, and let $\Sp(X)$ be the spectrum of $X\in
M_n(\CC)$. In \cite{MM}, Marcus and Moyls proved that every
linear map $\Phi\colon M_n(\CC)\rightarrow M_n(\CC)$ preserving
eigenvalues (counting multiplicities) is either an isomorphism
or an anti-isomorphism. Furthermore, by using their result, one
can show that every linear map $\Phi\colon M_n(\CC)\rightarrow
M_n(\CC)$ preserving spectrum of matrices (that is,
$\Sp(\Phi(A))=\Sp(A)$ for all $A\in M_n(\CC)$)  also has the
standard form, that is, it is an isomorphism or an anti-isomorphism.

This result has been generalized in different directions.
Instead of matrix algebras, the algebras of all bounded linear
operators on a complex Banach space were considered, see for
example \cite{A,AM,JS,OS} and the references therein. Also,
instead of linear or additive preservers, general preservers
(without linearity and additivity assumption) of spectrum on
$M_n(\CC)$ were considered. Baribeau and
Ransford~\cite{baribeua-ransford} proved that a spectrum
preserving ${\mathcal C}^1$ diffeomorphism from an open subset
of $M_n(\CC)$ into $M_n(\CC)$ has the standard form. Mr\v cun
showed in \cite{M} that, if $\Phi\colon M_n(\CC)\rightarrow
M_n(\CC)$ is a Lipschitz map with $\Phi(0)=0$ such that
$\Sp(\Phi(A)-\Phi(B))\subseteq \Sp(A-B)$ for all $A,B\in
M_n(\CC)$ then $\Phi$ has the standard form. Costara in
\cite{costara} improved the above result by relaxing
Lipschitzian property to continuity. Recently, the continuity
of the map was replaced by surjectivity.  Namely, in
\cite{BDS}, Bendaoud, Douimi and Sarih proved that a surjective
map $\Phi\colon M_n(\CC)\rightarrow M_n(\CC)$ satisfying $\Phi(0)=0$
and $\Sp(\Phi(A)-\Phi(B))\subseteq \Sp(A-B)$ for all $A,B\in
M_n(\CC)$ has the standard  form. We should mention here that
the condition $\Phi(0)=0$ is a harmless normalization: If
$\Psi$ is any map with $\Sp(\Psi(A)-\Psi(B))\subseteq\Sp(A-B)$,
then $\Phi(X):=\Psi(X)-\Psi(0)$ also satisfies this property.

\if false Aupetit and Mouton \cite{AM} extended the result of
Jafarian and Sourour to primitive Banach algebras with minimal
ideals. Later, Aupetit \cite{A} proved that every
spectrum-preserving linear surjection between von Neumann algebras
is a Jordan isomorphism.\fi

It is our aim to prove the following generalization of
\cite[Theorem 1]{costara} and \cite[Theorem 1.3]{BDS}, in which
the maps considered are neither continuous nor surjective.

\begin{theorem}
 Let $n\ge 2$. Suppose that $\Phi:M_n(\CC)\to
M_n(\CC)$ is a map with $\Phi(0)=0$ and
$$\Sp(\Phi(A)-\Phi(B))\subseteq\Sp(A-B)\quad{\rm for\ \ all}\quad A,B\in M_n(\CC).$$
Then there exists an invertible matrix $S\in M_n(\CC)$ such that
either $\Phi(A)=SAS^{-1}$ for all $A\in M_n(\CC)$ or
$\Phi(A)=SA^tS^{-1}$  for all $A\in M_n(\CC)$, where $A^t$ denotes
the transpose of $A$.
\end{theorem}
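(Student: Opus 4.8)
The plan is to reduce to the known surjective case of Bendaoud--Douimi--Sarih by first extracting enough rigidity from the spectral inclusion to show that $\Phi$ is injective, additive on a large enough set, and in fact forced to be an algebra or anti-algebra homomorphism on its image. The starting observation is that the hypothesis $\Sp(\Phi(A)-\Phi(B))\subseteq\Sp(A-B)$, applied with $B=0$, gives $\Sp(\Phi(A))\subseteq\Sp(A)$ for all $A$, and more generally that differences of images have spectra contained in the corresponding differences of arguments. In particular, $\Phi(A)-\Phi(B)$ is nilpotent whenever $A-B$ is nilpotent, and $\Phi(A)=\Phi(B)$ whenever $A-B$ has spectrum $\{0\}$ with $\Phi(A)-\Phi(B)$ forced to be zero is \emph{not} immediate — nilpotency alone does not give injectivity — so the first real task is to upgrade this.

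\smallskip
First I would analyze the action of $\Phi$ on scalar multiples of a fixed matrix and on rank-one idempotents. For a rank-one idempotent $P$, the matrix $\lambda P$ has spectrum $\{0,\lambda\}$ (with $0$ of multiplicity $n-1$), so $\Sp(\Phi(\lambda P))\subseteq\{0,\lambda\}$; tracking $\Phi(\lambda P)-\Phi(\mu P)$ whose spectrum lies in $\{0,\lambda-\mu\}$ pins down, via a pigeonhole/multiplicity argument, that $\Phi(\lambda P)$ must itself be (a scalar multiple of) a rank-one idempotent-like matrix, or at least have a single nonzero eigenvalue equal to $\lambda$. Doing this carefully for all rank-one idempotents and exploiting that any matrix with simple spectrum is a combination forces $\Phi$ to send many structured families to structured families. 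The key structural intermediate goal is to prove that $\Phi$ is injective: if $\Phi(A)=\Phi(B)$ then $\Sp(A-B-tC)\supseteq\Sp(\Phi(A-tC')-\Phi(B))$ type manipulations, choosing auxiliary matrices $C$ to probe, should yield that $A-B$ has spectrum contained in every translate, hence $A=B$.

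\smallskip
Once injectivity is in hand, the heart of the argument is to recover additivity (at least on a spanning set) so that linear-preserver machinery — ultimately Marcus--Moyls as quoted in the introduction — can be invoked. For this I would use the standard trick that the relation "$\Sp(X-Y)\ni 0$" encodes that $X-Y$ is singular, and "$\Sp(X-Y)\subseteq\{0\}$" encodes nilpotency; combining the nilpotency-preservation with the fact, provable from the above, that $\Phi$ preserves rank-one matrices in both directions, one gets that $\Phi$ preserves adjacency in the sense of graph of rank-one perturbations. Hua-type / fundamental-theorem-of-geometry arguments then show $\Phi$ is (up to the transpose) a bijective linear map on $M_n(\CC)$, and the spectrum-inclusion hypothesis, now read on the bijective $\Phi$, forces $\Sp(\Phi(A))=\Sp(A)$, so Marcus--Moyls delivers the standard form $\Phi(A)=SAS^{-1}$ or $\Phi(A)=SA^tS^{-1}$. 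The main obstacle I anticipate is the passage from the purely spectral hypothesis to injectivity and to rank-one preservation without any continuity or surjectivity: unlike in \cite{BDS}, one cannot move freely in the image, so every structural fact must be squeezed out of the spectral inclusion alone, and the bookkeeping of eigenvalue multiplicities in the pigeonhole steps — especially separating the cases $n=2$ and $n\ge 3$ — is where the delicate work lies.
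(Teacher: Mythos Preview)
Your proposal is not a proof but a strategy outline, and the key load-bearing steps are either unjustified or do not work as stated.

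\textbf{Injectivity.} Your sketch ``if $\Phi(A)=\Phi(B)$ then $\Sp(A-B-tC)\supseteq\Sp(\Phi(A-tC')-\Phi(B))$ type manipulations'' has the inclusion reversed: the hypothesis only gives $\Sp(\Phi(X)-\Phi(Y))\subseteq\Sp(X-Y)$, never $\supseteq$. From $\Phi(A)=\Phi(B)$ and any auxiliary $C$ you obtain $\Sp(\Phi(C)-\Phi(A))\subseteq\Sp(C-A)\cap\Sp(C-B)$, which does not force $A=B$; there is no evident probing choice of $C$ that closes this. The paper in fact never proves injectivity directly.

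\textbf{Rank-one preservation.} From $\Sp(\Phi(\lambda P))\subseteq\{0,\lambda\}$ you cannot conclude $\Phi(\lambda P)$ has rank one: even after one establishes trace preservation (which you do not mention, and which is the actual first step), one only gets that $\Phi(\lambda P)$ has eigenvalue $\lambda$ once and $0$ with multiplicity $n-1$; a nilpotent part at $0$ is not excluded, so the rank can be anything from $1$ to $n-1$. Consequently the Hua/adjacency route, which needs rank-one differences preserved in both directions, has no foundation here.

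\textbf{What the paper actually does.} The argument avoids injectivity and rank-one analysis entirely. It first shows $\Tr\Phi(X)=\Tr X$ for all $X$, and then isolates the set $\mathscr{K}$ of matrices whose $n$ eigenvalues are distinct and linearly independent over~$\ZZ$. For $A\in\mathscr{K}$ the inclusion $\Sp(\Phi(A))\subseteq\Sp(A)$ together with $\Tr\Phi(A)=\Tr A$ forces $\Sp(\Phi(A))=\Sp(A)$ with multiplicities, and a polarization gives $\Tr(AB)=\Tr(\Phi(A)\Phi(B))$ whenever $A,B,A-B\in\mathscr{K}$. The technical core (Proposition~2.4) is the construction of two bases $B_1,\dots,B_{n^2}$ and $C_1,\dots,C_{n^2}$ of $M_n(\CC)$ lying in $\mathscr{K}$ with all differences $B_i-C_j\in\mathscr{K}$; this turns the trace identity into a full-rank linear system, producing an invertible linear map $\Psi$ that agrees with $\Phi$ on a dense subset $\mathscr{K}'$. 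Marcus--Moyls is applied to $\Psi$, not to $\Phi$, and a final spectral argument (choosing $B$ cleverly in $\Sp(B)\subseteq\Sp(B+(Y-X))$) extends $\Phi=\Psi$ from $\mathscr{K}'$ to all of $M_n(\CC)$. None of the ingredients you propose --- rank-one idempotents, adjacency, reduction to the surjective theorem --- appear.
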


\begin{remark} It was shown by Costara~\cite{costara} that the maps which satisfy
$\Sp(\Phi(A)-\Phi(B))\supseteq\Sp(A-B)$ are also linear and
 of a standard form.
\end{remark}

\section{Structural features of bases of matrix algebras}

In this section, some  features of bases of $M_n(\CC)$  will be
given, which are useful for proving our main result.

Recall that complex numbers $\alpha_1,\dots,\alpha_n$ are
linearly independent over $\ZZ$, the ring of integers, if the
only possibility that $\sum_{i=1}^n z_i \alpha_i=0$ for some
numbers $z_1,\dots,z_n\in\ZZ$ is $z_1=z_2=\dots=z_n=0$.

\begin{lemma}\label{lem:countable}
Let $\alpha_1(t),\dots,\alpha_n(t)$ be $n$  linearly
independent analytic functions in $t$, defined in a
neighborhood of a closed unit disc~$\Delta$. Then the set of
all parameters $t_0\in\Delta$ such that the complex numbers
$\alpha_1(t_0),\dots,\alpha_n(t_0)$ are linearly dependent over
$\ZZ$ is at most countable.
\end{lemma}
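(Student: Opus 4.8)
The plan is to cover the exceptional set by a countable union of finite sets. First I would fix, without loss of generality, a \emph{connected} open neighborhood $U$ of $\Delta$ on which all the $\alpha_i$ are analytic; this is legitimate since $\Delta$ is connected, hence lies in a single connected component of any given neighborhood, and we may simply discard the other components.

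Next, for every integer vector $z=(z_1,\dots,z_n)\in\ZZ^n\setminus\{0\}$ introduce the analytic function $f_z:=\sum_{i=1}^n z_i\alpha_i$ on $U$. By the definition of linear dependence over $\ZZ$, the numbers $\alpha_1(t_0),\dots,\alpha_n(t_0)$ are linearly dependent over $\ZZ$ precisely when $f_z(t_0)=0$ for some $z\in\ZZ^n\setminus\{0\}$. Thus the set in question equals $\bigcup_{z\in\ZZ^n\setminus\{0\}}\bigl(Z(f_z)\cap\Delta\bigr)$, where $Z(f_z)$ denotes the zero set of $f_z$ in $U$.

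The key point is that each $f_z$ with $z\neq 0$ is not identically zero: since $\ZZ\subset\CC$, a relation $f_z\equiv 0$ would contradict the ($\CC$-)linear independence of $\alpha_1,\dots,\alpha_n$. By the identity theorem, a nonzero analytic function on the connected open set $U$ has no zero accumulating in $U$; in particular, were $Z(f_z)\cap\Delta$ infinite, it would (by compactness of $\Delta$) have an accumulation point in $\Delta\subset U$, which is impossible. Hence $Z(f_z)\cap\Delta$ is finite for each $z$. Since $\ZZ^n\setminus\{0\}$ is countable, the exceptional set is a countable union of finite sets, so it is at most countable, which is the claim.

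I expect no serious obstacle here; the only points worth a word of care are the reduction to a connected domain (so that the identity theorem is available) and the remark that linear independence of the $\alpha_i$ over $\CC$ is inherited over the subring $\ZZ$, which is what guarantees $f_z\not\equiv 0$ whenever $z\neq 0$.
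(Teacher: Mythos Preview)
Your argument is correct. Both your proof and the paper's rest on the same two facts: each nonzero integer combination $\sum z_i\alpha_i$ is a nonvanishing analytic function (by linear independence), hence has only finitely many zeros in the compact set $\Delta$; and there are only countably many such integer tuples. The difference is purely in execution. The paper first removes the zeros of $\alpha_1\cdots\alpha_n$, normalizes by dividing through by $\alpha_n$ to pass to meromorphic functions $\widehat{\alpha_i}=\alpha_i/\alpha_n$, rewrites integer relations with $z_n\ne 0$ as rational relations $p_1\widehat{\alpha_1}+\dots+p_{n-1}\widehat{\alpha_{n-1}}+1=0$ indexed by $\QQ^{n-1}$, and then handles the remaining cases $z_n=0$ by backward induction on the last nonzero coefficient. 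Your approach bypasses this detour entirely by indexing directly over $\ZZ^n\setminus\{0\}$, which is already countable, and invoking the identity theorem once; this is shorter and avoids both the normalization and the induction. The only thing the paper's route buys is perhaps a slight reduction in the index set (to $\QQ^{n-1}$ at each inductive step), but since countability is all that is needed, your direct argument is the more economical one.
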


\begin{proof}
An analytic function either vanishes identically or it has only
finitely many zeros in  compact subsets. Since
$\alpha_1(t),\dots,\alpha_n(t)$ are linearly independent, none of
them can be identically zero, hence each of them has only finitely
many zeros in $\Delta$. Let $t_1,\dots,t_k$ be all the zeros of the
product $\alpha_1(t)\alpha_2(t)\cdots \alpha_n(t)$ and let
$t_0\in\Delta\setminus\{t_1,\dots,t_k\}$. It is straightforward that
there exist integers $z_1,\dots,z_n\in\ZZ$, with $z_n\ne0$, such
that
$$z_1\alpha_1(t_0)+\dots+z_n\alpha_n(t_0)=0$$
if and only if there exist rational numbers $p_i \in \QQ$ (in
fact, $p_i=\frac{z_i}{z_n}$) such that the meromorphic
functions
$\widehat{\alpha_i}(t):=\frac{\alpha_i(t)}{\alpha_n(t)}$
satisfy
$$p_1\widehat{\alpha_1}(t_0)+\dots+p_{n-1}\widehat{\alpha_{n-1}}(t_0)+1=0.$$
By linear independency, the meromorphic function
$\alpha_{p_1,\dots,p_{n-1}}\colon t\mapsto
p_1\widehat{\alpha_1}(t)+\dots+p_{n-1}\widehat{\alpha_{n-1}}(t)+1$
is nonzero. Thus it has at most finitely many zeros on compact
subsets of $\Delta\setminus\{t_1,\dots,t_k\}$, and so it has at
most countably many zeros in $\Delta$. Since there are
countably many functions $\alpha_{p_1,\dots,p_{n-1}}$ as
$p_1,\dots,p_{n-1}$ varies along rational numbers, we get at
most $|\QQ|\times |\QQ|=|\QQ|$, i.e., at most countably many
points  which annihilate one among the functions
$\alpha_{p_1,\dots,p_{n-1}}$. Adding also $\{t_1,\dots,t_k\}$,
we see that there are  at most countably many points
$t\in\Delta$, for which the complex numbers
$\alpha_1(t),\dots,\alpha_n(t)$ are linearly dependent with
$z_n\ne0$. If $z_n=0$ we are seeking for linear dependence of
scalars $\alpha_1(t),\dots,\alpha_{n-1}(t)$. By the same
argument as before, there are at most countably many such
$t$'s, under additional hypothesis that $z_{n-1}\ne0$.
Proceeding inductively backwards, there are at most countably
many parameters $t\in\Delta$ for which
$\alpha_1(t),\dots,\alpha_n(t)$ are linearly dependent.
\end{proof}

To formulate the next technical lemma, we introduce the
following notation: given the set $\Omega \subseteq \CC$ of
cardinality $n$, let $\vec{\Omega} \subseteq \CC^n$ be the set
of  $n!$  column vectors in $\CC^n$  such that the set of their
components, relative to a standard basis, equals $\Omega$.
For example, if $\Omega = \{1,2\}$, then
$\vec\Omega = \{\left(
                                                                                                                 \begin{smallmatrix}
                                                                                                                   1 \\
                                                                                                                   2 \\
                                                                                                                 \end{smallmatrix}
                                                                                                               \right), \left(
                                                                                                                          \begin{smallmatrix}
                                                                                                                            2 \\
                                                                                                                            1 \\
                                                                                                                          \end{smallmatrix}
                                                                                                                        \right)
                                                                                                                        \}$.

\begin{lemma}\label{lem:permutations}
There exist $n$ sets $\Omega_1, \dots, \Omega_n \subseteq \CC$, each of cardinality $n$, 
such that
 $n$ vectors $x_1, \dots, x_n$ are linearly independent for any choice of $x_i \in \vec \Omega_i$, $i=1, \dots, n$.
\end{lemma}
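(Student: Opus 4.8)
The plan is to build the sets $\Omega_1,\dots,\Omega_n$ explicitly using a one-parameter analytic deformation, so that Lemma~\ref{lem:countable} can be invoked to discard the bad parameters. The natural candidate is to take $\Omega_i=\Omega_i(t)=\{1,\,t^{i},\,t^{2i},\dots,t^{(n-1)i}\}$ for a parameter $t$ lying in a neighborhood of the closed unit disc $\Delta$; more symmetrically one may take $\Omega_i(t)=\{\alpha_1(t)^{i-1},\dots,\alpha_n(t)^{i-1}\}$ for suitable analytic functions, but the Vandermonde-type choice is the cleanest. For a fixed $t$, an element $x_i\in\vec\Omega_i(t)$ is obtained by permuting the coordinates of the ``base'' vector $(1,t^{i},\dots,t^{(n-1)i})^t$ by some permutation $\sigma_i\in S_n$. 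Thus a choice of $(x_1,\dots,x_n)$ corresponds to an $n$-tuple of permutations $(\sigma_1,\dots,\sigma_n)$, and the matrix $X=[x_1\,|\,\cdots\,|\,x_n]$ has entries $X_{ji}=t^{\,i\,(\sigma_i(j)-1)}$. The goal is to show $\det X(t)\ne 0$ for all but countably many $t\in\Delta$, and then pick one good $t_0$ and set $\Omega_i:=\Omega_i(t_0)$.

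The key step is to show that $t\mapsto\det X(t)$ is \emph{not} the zero function, for every one of the $(n!)^{n}$ choices of $(\sigma_1,\dots,\sigma_n)$. Write $\det X(t)=\sum_{\tau\in S_n}\operatorname{sgn}(\tau)\prod_{i=1}^{n}t^{\,i\,(\sigma_i(\tau(i))-1)}$, a Laurent/ordinary polynomial in $t$ whose exponents are $e(\tau)=\sum_{i=1}^n i\,(\sigma_i(\tau(i))-1)$. If one can exhibit a permutation $\tau^\star$ whose exponent $e(\tau^\star)$ is achieved by \emph{no other} $\tau$, then the corresponding monomial cannot cancel, so $\det X\not\equiv 0$. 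A convenient way to force uniqueness is to replace the exponents $i$ by a rapidly growing sequence: take instead $\Omega_i(t)=\{1,\,t^{N^{\,i}},\,t^{2N^{\,i}},\dots,t^{(n-1)N^{\,i}}\}$ for a large integer $N$ (say $N\ge n$). Then the exponent of the monomial coming from $\tau$ and the tuple $(\sigma_i)$ is $\sum_{i=1}^n N^{\,i}\,(\sigma_i(\tau(i))-1)$, and since each digit $\sigma_i(\tau(i))-1$ lies in $\{0,\dots,n-1\}$, this is literally a base-$N$ expansion: two permutations $\tau,\tau'$ give the same exponent iff $\sigma_i(\tau(i))=\sigma_i(\tau'(i))$ for all $i$, i.e.\ iff $\tau=\tau'$ (the $\sigma_i$ are bijections). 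Hence \emph{every} monomial in the expansion of $\det X(t)$ is distinct, no cancellation occurs, and $\det X(t)$ is a nonzero polynomial for each of the finitely many tuples $(\sigma_1,\dots,\sigma_n)$.

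With non-vanishing in hand, the finish is immediate: each $\det X(t)$, being a nonzero analytic (polynomial) function on a neighborhood of $\Delta$, has only finitely many zeros in $\Delta$; taking the union over the finitely many tuples $(\sigma_1,\dots,\sigma_n)$, there are only finitely many $t\in\Delta$ for which some choice of $x_i\in\vec\Omega_i(t)$ yields a singular matrix. Pick any $t_0\in\Delta$ outside this finite exceptional set --- in fact any $t_0\in(0,1)$ avoiding finitely many points works, and one does not even need Lemma~\ref{lem:countable} for this particular construction, though it is the natural tool if one uses transcendental $\alpha_i(t)$ instead of powers of $t$. Set $\Omega_i:=\Omega_i(t_0)=\{1,t_0^{N^i},\dots,t_0^{(n-1)N^i}\}$; then for every choice $x_i\in\vec\Omega_i$ the vectors $x_1,\dots,x_n$ are linearly independent, as required.

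The main obstacle is precisely the non-cancellation argument: naively the determinant of a matrix of permuted Vandermonde columns can vanish identically for clever choices of the $\sigma_i$, so the content of the lemma is in choosing the exponents (here, the super-increasing sequence $N^i$) to make the monomial exponents injective in $\tau$. Once that combinatorial point is set up, everything else is routine.
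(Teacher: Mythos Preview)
Your argument is correct, but it follows a genuinely different route from the paper's. The paper proceeds by a short non-constructive induction: having chosen $x_1,\dots,x_k$ with the required property, one observes that the subspaces $\Lin\{P_1x_1,\dots,P_kx_k\}$, as the $P_i$ range over permutation matrices, form a finite family of proper subspaces of $\CC^n$; their union is therefore a proper closed subset, so one may pick $x_{k+1}$ outside it (and with distinct coordinates), and a one-line computation shows the inductive hypothesis persists. Your approach is instead explicit: you parametrize $\Omega_i$ by powers $t^{jN^i}$ and argue that for every tuple $(\sigma_1,\dots,\sigma_n)$ the determinant is a nonzero polynomial in $t$, the key combinatorial point being that the base-$N$ expansion forces the exponent map $\tau\mapsto\sum_i N^i(\sigma_i(\tau(i))-1)$ to be injective when $N\ge n$. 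The paper's proof is shorter and entirely conceptual (finite unions of proper subspaces are proper), while yours is constructive and yields concrete sets $\Omega_i$ at the cost of the base-$N$ trick; as you note, your version does not actually need Lemma~\ref{lem:countable}, since finitely many nonzero polynomials have only finitely many zeros in $\Delta$. One small point worth making explicit in your write-up: the choice $t_0\in(0,1)$ also guarantees $|\Omega_i(t_0)|=n$, since the powers $t_0^{jN^i}$ then have pairwise distinct moduli.
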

\begin{proof}
It suffices to prove that there exist $n$  column vectors
$x_1,\dots,x_n\in\CC^n$ such that (i) each of them has pairwise
distinct components in a standard basis, and  (ii) for
arbitrary permutation matrices $P_1,\dots,P_n$, the vectors
$P_1x_1,\dots,P_n x_n$ are linearly independent. Let $x_1$ be
an arbitrary column vector with pairwise distinct components.
Assume $x_1,\dots,x_k$, $k<n$, are column vectors with the
properties from the lemma. Then $\Lin\{P_1x_1,\dots,P_kx_k\}$
(the linear span of vectors $P_1x_1,\dots,P_kx_k$) with $P_1,
\dots, P_k$ permutation matrices is a $k$-dimensional linear
plane, and
there are at most finitely many 
such distinct $k$-dimensional subspaces of $\CC^n$ for all possible choices of permutation matrices. Since the union
of such $k$-dimensional subspaces is a closed proper subset of the whole space $\CC^n$, there exists a
vector $x_{k+1}\in\CC^n$ which does not belong to any
of these subspaces and has pairwise distinct components.
Let $P_1, \dots, P_k, P_{k+1}$ be arbitrary permutation
matrices. If $P_{k+1}x_{k+1}\in\Lin\{P_1x_1,\dots,P_kx_k\}$,
then $x_{k+1}\in P_{k+1}^{-1}\Lin\{P_1x_1,\dots,P_kx_k\}=
\{P_{k+1}^{-1}P_1x_1,\dots,P_{k+1}^{-1}P_kx_k\}$, and since
$P_{k+1}^{-1}P_i$, $i=1,\dots, k$, are again permutation
matrices, we obtain a contradiction. Thus $P_1x_1,\dots,P_{k+1}
x_{k+1}$ are linearly independent. By induction on $n$, the
lemma is true.
\end{proof}

Let ${\mathscr K}$ denotes the set of all matrices in
$M_n(\CC)$ which have $n$ pairwise distinct eigenvalues,
linearly independent over $\ZZ$. For example, since
$\pi=3^{\cdot}14\dots$ is not algebraic, the diagonal matrix
diag$(1,\pi,\pi^2,\dots,\pi^{n-1})\in{\mathscr K}$. We will use
the following properties of the set ${\mathscr K}$.

\begin{lemma}\label{lem:K'}
Let $B_1,\dots,B_{m}\in{\mathscr K}$  be a finite sequence of
matrices in $M_n(\CC)$. Then, the set of matrices $X\in {\mathscr
K}$ such that $B_k-X\in{\mathscr K}$ for each $k=1,\dots,m$ is dense
in $M_n(\CC)$.
\end{lemma}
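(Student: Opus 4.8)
The plan is to show that the set of "bad" matrices $X$ — those for which $X \notin \mathscr K$ or some $B_k - X \notin \mathscr K$ — has empty interior, so its complement is dense. I would first fix an arbitrary nonempty open ball $\mathcal U \subseteq M_n(\CC)$ and produce one matrix $X \in \mathcal U$ with the required property. The key observation is that membership in $\mathscr K$ is "generic along analytic curves": if $X(t)$ is a one-parameter analytic family of matrices whose eigenvalues, as functions of $t$, are analytic and linearly independent (over $\CC$, hence in particular not identically proportional), then Lemma \ref{lem:countable} tells us that $X(t) \in \mathscr K$ for all but countably many $t$ in the relevant disc. So the strategy is to engineer, through any prescribed open set, an analytic curve $X(t)$ along which the $2m+2$ relevant "eigenvalue tuples" — namely those of $X(t)$ and of each $B_k - X(t)$, $k = 1, \dots, m$ — are each a system of $n$ analytic functions that is linearly independent over $\CC$ in the variable $t$, and moreover along which every $X(t)$ and every $B_k - X(t)$ has pairwise distinct eigenvalues.

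The main obstacle is the simultaneous control: a single curve $X(t)$ must make all $m+1$ matrices behave well at once. To handle this I would pick a diagonalizable matrix $D_0 \in \mathcal U$ with pairwise distinct eigenvalues (a dense condition), conjugate everything so that $D_0$ is diagonal, and then wiggle along $X(t) = D_0 + tN$ for a suitable matrix $N$. For generic $N$, standard analytic perturbation theory (Rellich/Kato) gives that the eigenvalues of $D_0 + tN$ are analytic in $t$ near $t = 0$ and that the eigenvalues of $B_k - D_0 - tN$ are analytic as well (again shrinking the disc and using that $B_k - D_0$ has distinct eigenvalues for generic $D_0$, since $B_k \in \mathscr K$ is itself diagonalizable with distinct eigenvalues). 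The distinctness of eigenvalues along the curve holds off a finite set of $t$'s, being the zero set of the (analytic, not identically zero) discriminants. The crux is linear independence over $\CC$ of each eigenvalue $n$-tuple as analytic functions of $t$: one computes the derivative at $t=0$ of the eigenvalue branches (for $X(t) = D_0 + tN$ this derivative is the diagonal of $N$ in the eigenbasis, by first-order perturbation theory), and one checks that a generic choice of $N$ forces the constant-plus-linear parts $\lambda_i + t\,\mu_i$ to be affinely independent enough that no nontrivial $\CC$-linear combination vanishes identically — this rules out linear dependence of the branch functions. Since "generic $N$" means avoiding finitely many algebraic conditions for each of the $m+1$ matrices, a single good $N$ exists.

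Having fixed such a curve, Lemma \ref{lem:countable} (applied on a small closed disc around $0$ after rescaling $t$) shows that for each of the $m+1$ matrices the set of $t$ with the eigenvalues linearly \emph{dependent over $\ZZ$} is at most countable; the set of $t$ where distinctness fails is finite. The union over the $m+1$ matrices is still countable, so we may choose $t_0 \ne 0$ small enough that $X(t_0) \in \mathcal U$, $X(t_0) \in \mathscr K$, and $B_k - X(t_0) \in \mathscr K$ for all $k$. Since $\mathcal U$ was an arbitrary nonempty open set, this proves density. The only genuinely delicate point is verifying the $\CC$-linear independence of the eigenvalue branches for a generic perturbation direction $N$; everything else is routine analytic perturbation theory plus the counting afforded by Lemma \ref{lem:countable}.
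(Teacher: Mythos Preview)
Your overall strategy matches the paper's: build an analytic one-parameter family $X(t)$ through the given open set, show that for each $k$ the $n$ eigenvalue branches of $B_k-X(t)$ (including $B_0:=0$) are analytic and linearly independent over $\CC$, invoke Lemma~\ref{lem:countable} to conclude that only countably many parameters are bad, and pick a good one. The reduction to matrices with distinct eigenvalues, the analytic perturbation theory, and the countable-union argument are all exactly as in the paper.

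The gap is precisely at the step you flag as delicate. You propose a \emph{linear} curve $X(t)=D_0+tN$ and argue via first-order perturbation that the truncations $\lambda_i+t\mu_i$ (with $\mu_i$ the diagonal entries of $N$ in the relevant eigenbasis) can be made ``affinely independent enough that no nontrivial $\CC$-linear combination vanishes identically.'' But $n$ affine functions of a single variable $t$ span at most a $2$-dimensional space, so for $n\ge3$ they are \emph{always} linearly dependent over $\CC$; the constant-plus-linear data can never rule out a dependence relation among the full branches. One would need to control the higher-order Taylor coefficients of the branches, which the proposal does not address, and it is far from clear that a single linear direction $N$ carries enough freedom to force the first $n$ Taylor coefficients of all $m+1$ eigenvalue tuples to be independent simultaneously.

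The paper sidesteps this completely. Instead of a linear curve it takes a \emph{polynomial} curve $A(x)$ with $A(0)$ equal to the target matrix, and for each $k$ it prescribes the values of $A$ at $n$ further interpolation nodes to be matrices $C_{k,1},\dots,C_{k,n}$ manufactured via Lemma~\ref{lem:permutations}: their spectra $\Sp(B_k-C_{k,j})$ are sets $\Omega_1,\dots,\Omega_n$ with the property that any choice of vectors $x_j\in\vec\Omega_j$ is linearly independent. Since the continuous branches $\lambda_{k,1}(x),\dots,\lambda_{k,n}(x)$, evaluated at those $n$ nodes, yield exactly such a family of vectors (in some unknown ordering), linear independence of the branch functions follows immediately by evaluation. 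Lemma~\ref{lem:permutations} is thus the missing ingredient in your plan: it supplies permutation-robust interpolation targets that make the linear-independence step automatic, and hitting $n$ such targets per index $k$ forces a polynomial curve rather than a linear one.
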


\begin{proof}
Choose any $A\in M_n(\CC)$ and let $\varepsilon >0$. We will show
that there exists $X \in {\mathscr K}$ such that  $\|A-X\| <
\varepsilon$ and $B_k-X\in{\mathscr K}$ for each $k=1,\dots,m$.

Let $B_0 =0$. It is easy to see that the set ${\mathscr D}_n$
of matrices with $n$ distinct eigenvalues is dense and  open in
$M_n(\CC)$. Hence we may find $A_0\in {\mathscr D}_n$
arbitrarily close to $A$. Since ${\mathscr D}_n$ is open, each
neighborhood of $A_0$,  which is small enough,  contains only
matrices from ${\mathscr D}_n$. So we may find $A_1\in
{\mathscr D}_n$ arbitrarily close to $A_0$ such that
$B_1-A_1\in{\mathscr D}_n$.  Proceeding recursively, there
exists $\hat{A}\in M_n(\CC)$ such that
$\|A-\hat{A}\|<\varepsilon$ and that, moreover,
$\hat{A}-B_k\in{\mathscr D}_n$ for each
$k=0,\dots,m$.
Without loss of generality we write in the sequel $A$ instead
of $\hat{A}$, that is, we  assume that $A, A-B_k$ are all in
$\D_n$, $k=1, \dots, m$.

Note that, for each fixed $k$, $0\le k \le m$, by
Lemma~\ref{lem:permutations}, there exist $n$ matrices $C_{k,1},
\dots, C_{k,n}$ such that the spectral sets $\Sp(B_k-C_{k,i})$ have
the properties stated in Lemma~\ref{lem:permutations}. Then, for
such $C_{k,i}$, it is easily checked that there exists a polynomial
$A(x)$ such that $A(0)=A$ and
 $A((m+ n)k+i)=C_{k,i}$ for each $(k,i)\in {\mathscr
M}:=\{0,\dots,m\}\times \{1,\dots,n\}$.

We claim that, for each fixed $k$, $0\le k \le m$,
$\Sp(B_k-A(x))$ consists of $n$ linearly independent functions
which are analytic in a neighborhood of $x=0$.  In fact, the
spectrum of analytic perturbation $B_k-A(x)$ of $B_k$ consists
of functions which are locally analytic outside a closed
discrete set of branching points (see, e.g., \cite[Theorem
3.4.25]{aupetit}). Let $b_1,\dots,b_r$ be all the real
branching points of modulus smaller or equal to $(m+n)m+n$.
Note that for $j\in \{0,(m+n)k+i:\;(k,i)\in {\mathscr M}\}$,
$A(j)=A$ if $j=0$, and $A(j)=C_{k,i}$ otherwise, and so
$\#\Sp(B_k-A(j))=n$. Hence none of $b_i$ equals some $j\in
\{0,(m+n)k+i:\;(k,i)\in {\mathscr M}\}$. Choose a piecewise
linear path $\alpha:[0, (m+n)m+n]\to \CC$ which avoids and does
not encircle  branching points and passes through $j=(m+n)k+i$,
$(k,i)\in{\mathscr M}$. Then the spectral points
$\lambda_{k,1}(\alpha(s)),\dots,\lambda_{k,n}(\alpha(s))$ of
$\Sp(B_k-A(\alpha(s)))$ are continuous functions of $s\in
[0,n+m (m+n)]$. By the construction of $C_{k,i}$, for any
choice of $x_1\in\vec{\Sp} (B_k-C_{k,1}),\dots,x_n\in\vec{\Sp}
(B_k-C_{k,n})$, the vectors $x_1,\dots,x_n$ are linearly
independent. Then it easily follows that  the $n$ functions
$\lambda_{k,1}(\alpha(s)),\dots,\lambda_{k,n}(\alpha(s))$ are
also linearly independent for each fixed $k$. Hence,
$\lambda_{k,1}(x),\dots,\lambda_{k,n}(x)$ of $\Sp(B_k-A(x))$
are linearly independent analytic functions in a neighborhood
of the curve given by path $\alpha$. Moreover, since
$\#\Sp(A(0))=\#\Sp(A)=n$, $x=0$ is not a branching point. Thus,
also the restrictions of
$\lambda_{k,1}(x),\dots,\lambda_{k,n}(x)$ to a neighborhood of
$x=0$ are linearly independent and distinct analytic functions
since linear independence is checked by nonvanishing of
analytic function, i.e. Wronskian.

Finally, by Lemma~\ref{lem:countable},  $B_k-A(x)\in{\mathscr K}$
for every $k\in\{0,\dots,m\}$ and each $x$ outside a countable
subset of $\CC$. Hence, there exists $x$ arbitrarily close to $0$
such that $B_k-A(x)\in {\mathscr K}$ for  every $k\in\{0,\dots,m\}$.
Since $A(0)=A$, we can find $x$ such that $X=A(x)$  is close to
original matrix $A$, and that $B_k-X\in{\mathscr K}$ for each  $k$.
In particular, $B_0=0$ implies that also $X\in{\mathscr K}$. The
proof is complete.
\end{proof}

In particular, Lemma 2.3 implies that the set ${\mathscr K}$ is dense
 in $M_n(\CC)$ and hence it contains a basis of $M_n(\CC)$.

The following proposition is the main result of this section.
It gives a interesting  structural feature of basis of
$M_n(\CC)$, and is crucial for our proof of Theorem 1.1.

\begin{proposition}\label{lem:Properties-of-K2}
If $B_1, \dots, B_{n^2} \in \K$ is a basis in $M_n(\CC)$, then there
exists a basis $C_1,\dots,C_{n^2}$ in $M_n(\CC)$ such that $C_i \in
\K$ and $B_i-C_j\in{\mathscr K}$ for every $i,j \in
\{1,\dots,n^2\}$.
\end{proposition}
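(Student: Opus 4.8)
The plan is to obtain $C_1,\dots,C_{n^2}$ by perturbing a fixed reference basis, choosing each $C_j$ inside the dense ``good set'' supplied by Lemma~\ref{lem:K'}.

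First I would fix an arbitrary basis $E_1,\dots,E_{n^2}$ of $M_n(\CC)$, for instance the matrix units. The key elementary observation is that being a basis is an open condition on the tuple: a tuple $(X_1,\dots,X_{n^2})\in M_n(\CC)^{n^2}$ consists of linearly independent matrices exactly when the $n^2\times n^2$ determinant whose columns are the coordinate vectors of the $X_i$ (relative to the fixed basis $E_1,\dots,E_{n^2}$) is nonzero, and that determinant is a polynomial in the entries of the $X_i$, hence continuous. Consequently there is a $\delta>0$ such that whenever $\|C_j-E_j\|<\delta$ for all $j$, the matrices $C_1,\dots,C_{n^2}$ still form a basis of $M_n(\CC)$.

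Next I would apply Lemma~\ref{lem:K'} to the finite sequence $B_1,\dots,B_{n^2}\in\K$: the set
\[
\mathcal G:=\{X\in\K:\ B_k-X\in\K\ \text{for each}\ k=1,\dots,n^2\}
\]
is dense in $M_n(\CC)$. Since $\mathcal G$ depends only on the fixed matrices $B_1,\dots,B_{n^2}$, I may choose, independently for each $j\in\{1,\dots,n^2\}$, a matrix $C_j\in\mathcal G$ with $\|C_j-E_j\|<\delta$. By the openness remark the resulting $C_1,\dots,C_{n^2}$ form a basis of $M_n(\CC)$; by construction each $C_j\in\K$ and $B_i-C_j\in\K$ for all $i,j$, which is exactly the conclusion of the proposition.

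I do not expect a genuine obstacle here: the only point requiring care is the openness of the set of bases, which is elementary, while all the real work — producing, arbitrarily close to a given matrix, an element $X\in\K$ with $B_k-X\in\K$ for finitely many prescribed $B_k\in\K$ — has already been carried out in Lemma~\ref{lem:K'}. (Note also that the hypothesis that the $B_i$ themselves form a basis is not needed for this argument; only $B_i\in\K$ is used.)
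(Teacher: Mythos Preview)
Your proof is correct and follows essentially the same route as the paper: both use that being a basis is an open condition (via the nonvanishing of the coordinate determinant) and then perturb the standard basis $E_{ij}$ into the dense set supplied by Lemma~\ref{lem:K'}. Your additional remark that the basis hypothesis on the $B_i$ is never used---only $B_i\in\mathscr K$---is correct and is implicit in the paper's argument as well.
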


\begin{proof}
The matrices $C_1,\dots,C_{n^2}$ form a basis if and only if the
$n^2\times n^2$ matrix of their coefficients with respect to the
standard basis $E_{ij}$ of $M_n(\CC)$ (ordered lexicographically) is
invertible. Hence, starting with the basis $E_{ij}$ (whose matrix of
coefficients is the $n^2\times n^2$ identity matrix), the small
perturbation of $E_{ij}$ is again a basis for $M_n(\CC)$. We now use
the fact that the set
$${\mathscr K}'=\{C\in{\mathscr K}: C-B_i\in{\mathscr K}, \ i=1,2,\dots,n^2\}$$
is dense in $M_n(\CC)$ (Lemma~\ref{lem:K'}). By the density of
${\mathscr K}'$, we can
 find matrices $C_{11},C_{12}, \dots,C_{nn}\in{\mathscr K}$  with
$C_{ij}$ arbitrarily close to $E_{ij}$, such that
$C_{ij}-B_k\in{\mathscr K}$ for each $i,j,k$. Since they are
close to basis $E_{ij}$, the matrices
$C_{11},C_{12},\dots,C_{nn}$ are again a basis for $M_n(\CC)$.
\end{proof}

\section{Proof of Theorem 1.1}

In this section we  give a proof of our main result. Throughout
 we always assume that
$\Phi\colon M_n(\CC)\rightarrow M_n(\CC)$ is a map satisfying
$\Phi(0)=0$ and $\Sp(\Phi(A)-\Phi(B))\subseteq\Sp(A-B)$ for all
$A,B\in M_n(\CC)$. First  let us state four facts which were
already used in a paper by Costara~\cite{costara}.
 As usual, $\Tr(X)$ denotes
the trace of a matrix $X$.

\begin{lemma}\label{lem:step1}
 For every $X\in M_n(\CC)$, we have
$\Tr(\Phi(X))=\Tr (X).$
\end{lemma}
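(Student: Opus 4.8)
The plan is to exploit the spectral containment $\Sp(\Phi(A)-\Phi(B))\subseteq\Sp(A-B)$ in the special case $B=0$, together with the normalization $\Phi(0)=0$, to recover information about the trace. The immediate consequence of the hypothesis with $B=0$ is that $\Sp(\Phi(X))\subseteq\Sp(X)$ for every $X$; since both are multisets with at most $n$ points (as sets, exactly $\le n$ points), this already says that every eigenvalue of $\Phi(X)$ is an eigenvalue of $X$. This is not yet enough to pin down the trace, because multiplicities could differ. The key idea is to feed in a well-chosen second argument $B$ to force the multiplicities to match.

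First I would reduce to matrices with $n$ distinct eigenvalues. The set $\mathscr D_n$ of such matrices is dense in $M_n(\CC)$, and for $X\in\mathscr D_n$ the inclusion $\Sp(\Phi(X))\subseteq\Sp(X)$ combined with $X$ having $n$ distinct eigenvalues gives strong control: the spectrum of $\Phi(X)$ is a subset of an $n$-element set. To upgrade this to an equality $\Sp(\Phi(X))=\Sp(X)$ as sets (and then as multisets), I would use the containment with a generic perturbation: for suitable $B$ close to $0$, examine $\Sp(\Phi(X)-\Phi(B))\subseteq\Sp(X-B)$ and $\Sp(\Phi(B))\subseteq\Sp(B)$ simultaneously. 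The trick, which is the one used in this circle of ideas, is to note that if some eigenvalue $\lambda$ of $X$ is \emph{not} an eigenvalue of $\Phi(X)$, then choosing $B$ with a single eigenvalue very close to $\lambda$ and all others generic, the difference $X-B$ still has $n$ distinct eigenvalues while $\Phi(X)-\Phi(B)$ is forced to have an eigenvalue outside $\Sp(X-B)$, a contradiction. This yields $\Sp(\Phi(X))=\Sp(X)$ for $X\in\mathscr D_n$, hence equal traces there, since for a diagonalizable matrix with distinct eigenvalues the trace is the sum of the (distinct) spectral points.

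To pass from $\mathscr D_n$ to all of $M_n(\CC)$, I would again use a perturbation argument rather than continuity (since $\Phi$ is not assumed continuous): given arbitrary $X$ and any $Y\in\mathscr D_n$ with $X-Y\in\mathscr D_n$ (such $Y$ exists densely, e.g.\ by a version of the reasoning in Lemma~\ref{lem:K'}), we have $\Tr(\Phi(X)-\Phi(Y))=\Tr(X-Y)$ from the distinct-spectrum case applied to $X-Y$ — wait, this requires knowing $\Tr\Phi$ is additive along such differences, so instead I would argue directly: $\Sp(\Phi(X)-\Phi(Y))\subseteq\Sp(X-Y)$ with $X-Y\in\mathscr D_n$ gives, by the equality case just proved, $\Sp(\Phi(X)-\Phi(Y))=\Sp(X-Y)$, whence $\Tr(\Phi(X)-\Phi(Y))=\Tr(X-Y)$; combined with $\Tr\Phi(Y)=\Tr Y$ this gives $\Tr\Phi(X)=\Tr X$ provided $\Tr(\Phi(X)-\Phi(Y))=\Tr\Phi(X)-\Tr\Phi(Y)$, which is automatic since trace is linear on $M_n(\CC)$.

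The main obstacle I anticipate is the upgrade from the set inclusion $\Sp(\Phi(X))\subseteq\Sp(X)$ to equality \emph{as multisets} (equivalently, to equality of traces) for $X$ with distinct eigenvalues; the subtlety is that $\Phi(X)$ need not be diagonalizable a priori, so one must argue that its spectrum — with algebraic multiplicities — coincides with that of $X$, not merely as a set. Resolving this cleanly is where the perturbation $B$ with one eigenvalue near a prescribed $\lambda\in\Sp(X)$ must be chosen with care so that $X-B$ stays in $\mathscr D_n$; handling the case where $\Phi(X)$ has a repeated eigenvalue will require running this argument for each $\lambda\in\Sp(X)$ and counting dimensions.
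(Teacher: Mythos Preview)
Your outline has a genuine gap at exactly the point you yourself flag. The perturbation idea in step~1 does not work as stated: knowing that $\lambda\in\Sp(X)\setminus\Sp(\Phi(X))$ and that $B$ has one eigenvalue near $\lambda$ tells you nothing about the spectrum of the \emph{difference} $\Phi(X)-\Phi(B)$. Eigenvalues of $C-D$ are not differences of eigenvalues of $C$ and $D$ unless $C$ and $D$ commute, and there is no reason for $\Phi(X)$ and $\Phi(B)$ to be simultaneously triangularizable. So the claimed contradiction ``$\Phi(X)-\Phi(B)$ is forced to have an eigenvalue outside $\Sp(X-B)$'' is unsupported; the hypothesis $\Sp(\Phi(X)-\Phi(B))\subseteq\Sp(X-B)$ is precisely what you are given, and nothing in your choice of $B$ violates it. The same unresolved difficulty reappears in your extension step: when you invoke ``the equality case just proved'' to conclude $\Sp(\Phi(X)-\Phi(Y))=\Sp(X-Y)$ for $X-Y\in\mathscr D_n$, you would need to rerun the step-1 argument for the pair $(X,Y)$ (perturbing $Y$), not merely quote its conclusion for a single argument~$Z$, since $\Phi(X)-\Phi(Y)\neq\Phi(X-Y)$ in general.

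The paper does not give a self-contained argument; it defers to Costara's proof of his Equation~(8), which does not use continuity of $\Phi$ and proceeds along different lines from your sketch. The starting observation there is that $\Sp(\Phi(\lambda I))\subseteq\{\lambda\}$ forces $\Tr\Phi(\lambda I)=n\lambda$ for every scalar $\lambda$, and then the two-variable hypothesis with $B=\lambda I$ (and further carefully chosen substitutions) yields arithmetic constraints on $\Tr\Phi(A)$ strong enough to pin it down as $\Tr A$. The point is that the upgrade from set inclusion to trace equality is obtained by an algebraic/combinatorial argument exploiting scalar shifts, not by a geometric perturbation of the kind you attempt.
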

\begin{proof}
The proof is the same as the proof of Equation~(8) in
\cite[pp.~2675--2676]{costara}. We omit the details.
\end{proof}

\begin{lemma}\label{lem:step2}
If $A-B\in{\mathscr K}$, then $\Sp(\Phi(A)-\Phi(B))=\Sp(A-B)$,
counted with multiplicities. In particular, $A\in{\mathscr K}$
implies $\Sp\Phi(A)=\Sp A$.
\end{lemma}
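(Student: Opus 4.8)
The plan is to play the set inclusion $\Sp(\Phi(A)-\Phi(B))\subseteq\Sp(A-B)$ against the trace identity of Lemma~\ref{lem:step1}, using that membership in ${\mathscr K}$ supplies $\ZZ$-linear independence of the eigenvalues.

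First, write $\Sp(A-B)=\{\lambda_1,\dots,\lambda_n\}$; since $A-B\in{\mathscr K}$ these $n$ numbers are pairwise distinct and linearly independent over $\ZZ$. The matrix $\Phi(A)-\Phi(B)$ has exactly $n$ eigenvalues counted with algebraic multiplicity, and by hypothesis each of them lies in the finite set $\{\lambda_1,\dots,\lambda_n\}$. Hence there are nonnegative integers $m_1,\dots,m_n$ with $\sum_{i=1}^n m_i=n$ such that, counted with multiplicities, $\Sp(\Phi(A)-\Phi(B))$ consists of $\lambda_i$ taken $m_i$ times.

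Next, apply Lemma~\ref{lem:step1} twice to obtain $\Tr(\Phi(A)-\Phi(B))=\Tr\Phi(A)-\Tr\Phi(B)=\Tr A-\Tr B=\Tr(A-B)$. Expressing both traces as sums of eigenvalues gives $\sum_{i=1}^n m_i\lambda_i=\sum_{i=1}^n\lambda_i$, that is $\sum_{i=1}^n(m_i-1)\lambda_i=0$ with every $m_i-1\in\ZZ$. Linear independence of $\lambda_1,\dots,\lambda_n$ over $\ZZ$ forces $m_i=1$ for all $i$, which is precisely the asserted equality $\Sp(\Phi(A)-\Phi(B))=\Sp(A-B)$ with multiplicities. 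The ``in particular'' statement then follows by taking $B=0$: here $A=A-B\in{\mathscr K}$ and $\Phi(A)=\Phi(A)-\Phi(0)$, so the equality just proved yields $\Sp\Phi(A)=\Sp A$.

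I do not expect a genuine obstacle here: the entire argument is the multiplicity bookkeeping combined with the one observation that the trace constraint plus $\ZZ$-independence rigidifies the multiplicities. The only point deserving a word of care is that set inclusion of spectra already suffices to guarantee that \emph{every} eigenvalue of $\Phi(A)-\Phi(B)$, counted with multiplicity, equals some $\lambda_i$; once that is granted, the rest is arithmetic.
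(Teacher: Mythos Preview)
Your proof is correct and follows essentially the same approach as the paper: the paper invokes \cite[Lemma~5]{costara} together with Lemma~\ref{lem:step1} and linearity of the trace, and your argument simply unpacks that cited lemma explicitly, using the $\ZZ$-linear independence of the eigenvalues to force all multiplicities to equal~$1$. The ``in particular'' clause via $B=0$ is handled identically.
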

\begin{proof}
The first claim follows by \cite[Lemma 5]{costara},
Lemma~\ref{lem:step1} and the linearity of trace. 
The last
claim follows by inserting $B=0$.
\end{proof}
\bigskip

 For any $X\in M_n(\CC)$, let $S_2(X)$ be the
second symmetric function in eigenvalues of a matrix $X$ (i.e.,
the coefficient of $x^{n-2}$ in characteristic polynomial
$p(x)=\det(x I-X)$).
\begin{lemma}\label{lem:step3}
For every $X\in M_n(\CC)$, we have
 $(\Tr X)^2=\Tr(X^2)+2S_2(X)$.
\end{lemma}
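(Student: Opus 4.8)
The plan is to reduce the claimed identity to an elementary statement about symmetric functions of the eigenvalues. Write $\lambda_1,\dots,\lambda_n$ for the eigenvalues of $X$, listed with algebraic multiplicities, so that the characteristic polynomial factors as
\[
p(x)=\det(xI-X)=\prod_{i=1}^n(x-\lambda_i)=x^n-\Bigl(\textstyle\sum_i\lambda_i\Bigr)x^{n-1}+\Bigl(\textstyle\sum_{i<j}\lambda_i\lambda_j\Bigr)x^{n-2}-\cdots.
\]
By the definition of $S_2$, comparing the coefficient of $x^{n-2}$ gives $S_2(X)=\sum_{i<j}\lambda_i\lambda_j$, while trivially $\Tr X=\sum_i\lambda_i$ (the coefficient of $x^{n-1}$, up to sign).

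Next I would record that $\Tr(X^2)=\sum_i\lambda_i^2$, which needs a word of justification when $X$ is not diagonalizable. By Schur's theorem $X$ is (unitarily) similar to an upper triangular matrix $T$ whose diagonal entries are exactly $\lambda_1,\dots,\lambda_n$; then $T^2$ is upper triangular with diagonal entries $\lambda_i^2$, and since the trace is invariant under similarity, $\Tr(X^2)=\Tr(T^2)=\sum_i\lambda_i^2$. (Alternatively: both sides of the asserted identity are polynomial, hence continuous, functions of the entries of $X$ and they agree on the dense set of diagonalizable matrices, so they agree everywhere.)

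With these three identifications in hand, the lemma becomes the purely algebraic identity
\[
\Bigl(\textstyle\sum_i\lambda_i\Bigr)^2=\textstyle\sum_i\lambda_i^2+2\sum_{i<j}\lambda_i\lambda_j,
\]
which is just the expansion of the square of a sum (equivalently, the Newton identity $p_1^2=p_2+2e_2$ relating the power sums $p_1,p_2$ to the elementary symmetric polynomials $e_2$). This is immediate, and substituting back the matrix expressions yields $(\Tr X)^2=\Tr(X^2)+2S_2(X)$. I do not anticipate a genuine obstacle; the only point requiring care is the passage to eigenvalues for non-diagonalizable $X$, which is handled by Schur triangularization or by the continuity/density remark above.
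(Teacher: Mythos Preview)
Your argument is correct and complete. The paper's own proof consists only of the remark ``A straightforward calculation'' together with a reference, so your eigenvalue computation via Schur triangularization (or density of diagonalizable matrices) and the elementary identity $\bigl(\sum_i\lambda_i\bigr)^2=\sum_i\lambda_i^2+2\sum_{i<j}\lambda_i\lambda_j$ is exactly the sort of verification the authors have in mind.
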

\begin{proof}
A straightforward calculation. Also see
\cite[Eq.(10)]{costara}.
\end{proof}

\begin{lemma}\label{lem:step4}
If $A,B, (A-B)\in{\mathscr K}$, then
$\Tr(AB)=\Tr(\Phi(A)\Phi(B))$.
 \end{lemma}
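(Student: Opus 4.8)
The plan is to turn the three spectral hypotheses $A,B,A-B\in\K$ into trace identities via Lemma~\ref{lem:step1}, Lemma~\ref{lem:step2} and the polynomial identity of Lemma~\ref{lem:step3}, and then to read off $\Tr(AB)$ by polarization.

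First I would collect what the earlier lemmas give. From Lemma~\ref{lem:step2}, $A\in\K$ forces $\Sp\Phi(A)=\Sp A$ and $B\in\K$ forces $\Sp\Phi(B)=\Sp B$, both counted with multiplicities; since the eigenvalues of a square are the squares of the eigenvalues, this yields $\Tr(\Phi(A)^2)=\Tr(A^2)$ and $\Tr(\Phi(B)^2)=\Tr(B^2)$, and also $S_2(\Phi(A))=S_2(A)$ and $S_2(\Phi(B))=S_2(B)$. Applying Lemma~\ref{lem:step2} to $A-B\in\K$ gives $\Sp(\Phi(A)-\Phi(B))=\Sp(A-B)$ with multiplicities, hence $S_2(\Phi(A)-\Phi(B))=S_2(A-B)$. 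Finally, Lemma~\ref{lem:step1} gives $\Tr\Phi(A)=\Tr A$ and $\Tr\Phi(B)=\Tr B$, so $\Tr(\Phi(A)-\Phi(B))=\Tr(A-B)$.

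Next I would apply Lemma~\ref{lem:step3} with $X=A-B$ and with $X=\Phi(A)-\Phi(B)$ and subtract the two instances of $(\Tr X)^2=\Tr(X^2)+2S_2(X)$. Because $\Tr(\Phi(A)-\Phi(B))=\Tr(A-B)$ and $S_2(\Phi(A)-\Phi(B))=S_2(A-B)$, this gives $\Tr((\Phi(A)-\Phi(B))^2)=\Tr((A-B)^2)$. Expanding both sides using $\Tr(UV)=\Tr(VU)$,
$$\Tr(\Phi(A)^2)-2\Tr(\Phi(A)\Phi(B))+\Tr(\Phi(B)^2)=\Tr(A^2)-2\Tr(AB)+\Tr(B^2),$$
and the ``squared'' terms cancel pairwise by the first step, leaving $\Tr(\Phi(A)\Phi(B))=\Tr(AB)$.

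I do not foresee a genuine obstacle; the one point to be careful about is that Lemma~\ref{lem:step2} delivers spectra \emph{with multiplicities}, which is precisely what is needed so that $S_2$ and $\Tr(X^2)=\sum_i \lambda_i(X)^2$ are determined by the spectrum alone. (One could instead hope to recover $\Tr(AB)$ from $\Tr((A+B)^2)$, but $A+B$ need not lie in $\K$, whereas $A-B$ does by hypothesis, so the difference is the right matrix to polarize against.)
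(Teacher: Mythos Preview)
Your proof is correct and follows essentially the same route as the paper: use Lemma~\ref{lem:step2} on $A-B\in\K$ together with Lemma~\ref{lem:step3} to get $\Tr\bigl((\Phi(A)-\Phi(B))^2\bigr)=\Tr\bigl((A-B)^2\bigr)$, then cancel $\Tr(A^2)=\Tr(\Phi(A)^2)$ and $\Tr(B^2)=\Tr(\Phi(B)^2)$ (available because $A,B\in\K$) to isolate $\Tr(AB)=\Tr(\Phi(A)\Phi(B))$. The only cosmetic difference is that you obtain $\Tr(\Phi(A)^2)=\Tr(A^2)$ directly from $\Tr(X^2)=\sum_i\lambda_i(X)^2$, whereas the paper says ``likewise'', implicitly reusing Lemma~\ref{lem:step3}; both are fine.
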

\begin{proof}
For any $A,B\in{\mathscr K}$ such that $A-B\in{\mathscr K}$,
by Lemma~\ref{lem:step3}, we have
$$\Tr\bigl((A-B)^2  \bigr)=\bigl(\Tr(A-B) \bigr)^2-2S_2(A-B).$$
Since $A-B\in{\mathscr K}$, Lemma~\ref{lem:step2} implies
$S_2(A-B)=S_2(\Phi(A)-\Phi(B))$ and
$\Tr(A-B)=\Tr(\Phi(A)-\Phi(B))$. It follows that
\begin{equation}\label{eq:Tr(AB)}
\Tr\bigl((A-B)^2  \bigr)=\bigl(\Tr(\Phi(A)-\Phi(B))^2  \bigr).
\end{equation}
Moreover, note that $A,B\in{\mathscr K}$. Likewise, $A,B\in
{\mathscr K}$ implies $\Tr (A^2)=\Tr(\Phi(A)^2)$ and $\Tr
(B^2)=\Tr(\Phi(B)^2)$. Hence, linearizing~\eqref{eq:Tr(AB)}
gives
\begin{equation}\label{eq:Tr(AB)=Tr(Phi..)}\Tr(AB)=\Tr(\Phi(A)\Phi(B))\end{equation}
whenever $A,B,A-B\in{\mathscr K}$.
\end{proof}

\begin{proof}[Proof of Theorem 1.1]%

Given a matrix $X=(x_{ij})\in M_n$, let us introduce its row vector
$R_X:=(x_{11}, x_{12},\dots,x_{1n},x_{21},\dots,x_{2n},\dots,x_{nn})$ and its column vector
$C_X:=\bigl(x_{11},x_{21},\dots,x_{n1},x_{12},\dots,x_{n2},\dots,x_{nn} \bigr)^t.$ It is elementary
that $\Tr(XY)=R_XC_Y$, and hence we may
rewrite~\eqref{eq:Tr(AB)=Tr(Phi..)} into
\begin{equation}\label{3}
R_{\Phi(A)}C_{\Phi(B)}=R_AC_B
\end{equation}
whenever $A,B,A-B\in{\mathscr K}$ (see also~Chan, Li, and Sze
\cite{chan-li-sze}).

Now, for any $A\in{\mathscr K}$, by Proposition 2.4, we can
find a basis $B_1,\dots,B_{n^2}\in{\mathscr K}$ such that
$A-B_i\in{\mathscr K}$. Using $B_i$ in place of $B$
in~\eqref{3}, we obtain a system of $n^2$ linear equations
$$R_{\Phi(A)}C_{\Phi(B_i)}=R_AC_{B_i},\quad i=1,2,\dots,n^2.$$
Introducing two $n^2\times n^2$ matrices ${\mathcal
U}:=[C_{\Phi(B_1)}|C_{\Phi(B_2)}|\dots|C_{\Phi(B_{n^2})}]$ and
${\mathcal C}:= [C_{B_1}|\dots|C_{B_{n^2}}]$, this system can be
rewritten into
\begin{equation}\label{4}
R_{\Phi(A)}{\mathcal U}=R_A{\mathcal C}.\end{equation} The
identity holds for each $A\in{\mathscr K}$ satisfying
$A-B_i\in{\mathscr K}$. By Proposition 2.4 again, there exists
another basis $A_1,\dots,A_{n^2}\in{\mathscr K}$ such that
$B_i-A_j\in{\mathscr K}$ for every $i,j\in\{1,2,\dots,n^2\}$.
Using $A_j$ in place of $A$ in~\eqref{4}, the
identity~\eqref{4} can be rewritten into a matrix equation
$${\mathcal V}{\mathcal U}={\mathcal R}{\mathcal C},$$
where ${\mathcal R}$ is an $n^2\times n^2$ matrix with $j$-th
row equal to $R_{A_j}$, and ${\mathcal V}$ is an $n^2\times
n^2$ matrix with $j$-th row  $R_{\Phi(A_j)}$. Since
$A_1,\dots,A_{n^2}$ is a basis, the matrix ${\mathcal R}$ is
invertible. Likewise, since $B_1,\dots,B_{n^2}$ is a basis,
${\mathcal C}$ is invertible. This implies invertibility of
${\mathcal U}$. In particular,~\eqref{4} yields
$$R_{\Phi(A)}=R_A{\mathcal C}{\mathcal U}^{-1}=R_A{\mathcal W};\qquad ({\mathcal W}={\mathcal C}{\mathcal U}^{-1})$$
for all $A\in {\mathscr K}$ with $A-B_i\in{\mathscr K}$.

Set~${\mathscr K}'=\{A\in{\mathscr K}: A-B_i\in{\mathscr K},\
i=1,2,\dots,n^2\}$. By Lemma~\ref{lem:K'}, the set~${\mathscr
K}'$
 is  dense in $M_n(\CC)$. Therefore we get
$$R_{\Phi(X)}=R_X{\mathcal W}, \qquad\forall \  X\in{\mathscr K}'.$$
Recall  that ${\mathcal W}={\mathcal C}{\mathcal U}^{-1}$
 is an invertible $n^2\times n^2$ matrix. Now, define a linear bijection $\Psi:M_n(\CC)\to
M_n(\CC)$ by
$$R_{\Psi(X)}=R_X{\mathcal W},  \qquad\forall \  X\in M_n(\CC).$$
The map $\Psi$ coincides with $\Phi$ on a dense
subset~${\mathscr K}'$. Moreover, Lemma~\ref{lem:step2} implies
that $\Sp(\Phi(K))=\Sp (K)$ for every $K\in{\mathscr K}'$.
Hence we have $\Sp(\Psi(K))=\Sp (K)$ for $K\in{\mathscr K}'$.
It follows from the continuity of $\Psi$ and of the spectral
function $\Sp \colon M_n(\CC)\to\CC$ that $\Psi$ is a linear bijection
satisfying  $\Sp(\Psi(X))=\Sp (X)$ for every $X\in M_n(\CC)$.
Then, by Marcus-Moyls~\cite{MM}, there exists an invertible
$S\in M_n(\CC)$ such that either $\Psi(X)=SXS^{-1}$ for all
$X\in M_n(\CC)$ or $\Psi(X)=SX^tS^{-1}$ for all $X\in
M_n(\CC)$. This gives that either
$$\Phi(K)=SKS^{-1} \quad{\rm for\ \ all} \quad K\in{\mathscr K}',$$
or $$\Phi(K)=SK^tS^{-1}\quad{\rm for\ \ all} \quad
K\in{\mathscr K}'.$$ Clearly, neither the hypothesis nor the
end result changes if we replace $\Phi$ by the map $X\mapsto
S^{-1}\Phi(X)S$ or by the map $X\mapsto (S^{-1}\Phi(X)S)^t$.
So, with no loss of generality, we can assume  that
$$\Phi(K)=K \quad{\rm for\ \ all} \quad K\in{\mathscr K}'.$$
We assert that $\Phi(X)=X$ for every $X\in M_n(\CC)$. To show this,
write $Y:=\Phi(X)$. By the assumption on $\Phi$, for any
$K\in{\mathscr K}'$, we have
$$\Sp(K-Y)=\Sp(\Phi(K)-\Phi(X))\subseteq\Sp(K-X).$$
Since ${\mathscr K}'$ is dense in $M_n(\CC)$ and as spectral
function is continuous, we derive
$$\Sp(A-Y)\subseteq\Sp(A-X)\quad{\rm for\ \ all} \quad A\in M_n(\CC).$$
For any $A\in M_n(\CC)$, let $B=A-Y$. Then the above relation yields
\begin{equation}\label{5}
\Sp(B)\subseteq \Sp(B+(Y-X))\quad{\rm for\ \ all} \quad B\in
M_n(\CC). \end{equation} We will show that $Y=X$. Assume on the
contrary that $Y-X\ne0$. In addition, for the sake of
convenience, we may also assume that $Y-X$ is already in its
Jordan form.
 If $Y-X$ is not a nilpotent matrix, then inserting
$B=-(Y-X)$ in~\eqref{5} yields a contradiction
$\{0\}\neq\Sp(Y-X)\subseteq\Sp(0)=\{0\}$. If
$Y-X=J_{n_1}\oplus\dots\oplus J_{n_k}$ is a nonzero nilpotent
matrix in its Jordan form, let
$B=(J_{n_1}^{n_1-1}\oplus\dots\oplus J_{n_k}^{n_k-1})^t$
in~\eqref{5}, where we tacitly assume that $A^0=\Id$ for any
$A\in M_n(\CC)$ including the zero matrix. Since $Y-X\ne0$,
there exists at least one block  of size $\ge 2$. Clearly then,
$B$ is not invertible, but $(Y-X)+B$ is of full rank. This
contradicts the fact that $\Sp(B)\subseteq\Sp(B+Y-X)$. Hence
$Y=X$, that is, $\Phi(X)=X$ for each $X\in M_n(\CC)$.
\end{proof}

\end{document}